\documentclass[12pt]{article}

\usepackage[margin=1.17in]{geometry}
\usepackage{amssymb,amsmath}
\usepackage{amsthm,enumitem,mathtools,microtype}
\usepackage{hyperref}
\usepackage{mathrsfs}
\usepackage{textcomp}
\usepackage{xcolor}
\hypersetup{colorlinks, citecolor=black, filecolor=black, linkcolor=black, urlcolor=black, hypertexnames=false, pdftitle={A Counterexample to Interior Lp Estimates for Nonlocal Elliptic Equations with Continuous Kernels}, pdfauthor={Tianling Jin, Chang Liu, Jingang Xiong}}
\usepackage[abbrev,nobysame]{amsrefs}
\usepackage{todonotes}
\usepackage{verbatim}
\usepackage{sectsty}
\usepackage[T1]{fontenc}
\usepackage{lmodern}
\usepackage{needspace}

\DeclareMathOperator{\supp}{supp}
\DeclarePairedDelimiter{\norm}{\lVert}{\rVert}

\numberwithin{equation}{section}
\newtheorem{theorem}{Theorem}[section]
\newtheorem{lemma}[theorem]{Lemma}
\theoremstyle{remark}

\begin{document}
\title{\textbf{Failure of interior $L^p$ estimates for nonlocal elliptic equations with continuous kernels}\bigskip}
\author{ \ \ Chang Liu}
\date{}
\maketitle

\begin{abstract}
Let $0<\sigma<2$ and $1\leq p<\infty$. We construct a fixed linear nonlocal elliptic operator $L$ and functions $u_m\in C^\infty(\mathbb R^n)\cap L^\infty(\mathbb R^n)$ such that
$$
\|u_m\|_{L^\infty(\mathbb R^n)}+\|Lu_m\|_{L^p(B_2)}\longrightarrow0, \qquad \liminf_{m\to\infty}\|(-\Delta)^{\sigma/2}u_m\|_{L^p(B_1)}>0.
$$
The kernel is symmetric in the increment variable, uniformly elliptic, and satisfies the standard bounds on its first two derivatives in that variable. Its dependence on $x$ has a uniform modulus of continuity of order $(\log(1/r))^{-1/2}$ as $r\downarrow0$. Moreover, $\|Du_m\|_{L^p(B_4)}\to0$ for $\sigma>1$, and these norms remain bounded for $\sigma=1$. Thus the interior $L^p$ estimates for translation-invariant operators do not extend to this class under mere continuity in $x$.

\medskip
\noindent{\it Keywords}: Nonlocal elliptic equations; interior $L^p$ estimates; continuous kernels.

\medskip
\noindent{\it MSC (2020)}: Primary 35R09; Secondary 35B45, 35B65.

\end{abstract}

\section{Introduction}
Let $n\geq1$ and $0<\sigma<2$. We consider nonlocal elliptic equations of the form
\begin{equation}
\inf_{\alpha\in\mathcal A}\frac12\int_{\mathbb R^n}\bigl(u(x+y)+u(x-y)-2u(x)\bigr)K_\alpha(x,y)\,dy=f(x) \quad\text{in }B_4,
\end{equation}
where $\mathcal A$ is an index set and $f\in L^p(B_4)$, $1\leq p<\infty$. We assume that $K_\alpha(x,y)=K_\alpha(x,-y)$ and, for some $0<\lambda\leq\Lambda$,
\begin{equation}
\frac{\lambda}{|y|^{n+\sigma}}\leq K_\alpha(x,y)\leq\frac{\Lambda}{|y|^{n+\sigma}},
\end{equation}
\begin{equation}
|D_y^iK_\alpha(x,y)|\leq\frac{\Lambda}{|y|^{n+\sigma+i}}, \qquad i=1,2,
\end{equation}
and
\begin{equation}
|K_\alpha(x,y)-K_\alpha(x',y)|\leq\frac{\Lambda\omega(|x-x'|)}{|y|^{n+\sigma}}
\end{equation}
for all $\alpha\in\mathcal A$, $x,x'\in B_4$, and $y\ne0$. Here $\omega:[0,\infty)\to[0,\infty)$ is continuous and nondecreasing, with $\omega(0)=0$. Recall that
\begin{equation}
(-\Delta)^{\sigma/2}u(x)=-\frac{c_{n,\sigma}}2\int_{\mathbb R^n}\frac{u(x+y)+u(x-y)-2u(x)}{|y|^{n+\sigma}}\,dy,
\end{equation}
where
$$
c_{n,\sigma}=\left(\int_{\mathbb R^n}\frac{1-\cos y_1}{|y|^{n+\sigma}}\,dy\right)^{-1}=2^{\sigma-1}\sigma\frac{\Gamma((n+\sigma)/2)}{\Gamma(1-\sigma/2)}\pi^{-n/2};
$$
see Fern\'andez-Real and Ros-Oton \cite{FernandezRealRosOton2024}.

Caffarelli and Silvestre \cites{CaffarelliSilvestre2009,CaffarelliSilvestre2011a,CaffarelliSilvestre2011b} established H\"older, $C^{1,\alpha}$, and Evans--Krylov estimates for fully nonlinear integro-differential equations, with estimates that recover the second-order theory as $\sigma\to2$. Further Schauder estimates were obtained by Jin and Xiong \cite{JinXiong2016} and Serra \cite{Serra2015}. For second-order linear elliptic equations, the Calder\'on--Zygmund estimate and a perturbation argument yield interior $W^{2,p}$ estimates under continuity of the coefficients, $1<p<\infty$; see Gilbarg and Trudinger \cite{GilbargTrudinger2001}. Caffarelli \cite{Caffarelli1989} developed the corresponding theory for fully nonlinear equations. We study whether analogous estimates for $(-\Delta)^{\sigma/2}u$ hold under (1.2)--(1.4).

For translation-invariant linear operators, Dong and Kim \cite{DongKim2012} proved $L^p$ estimates for $(-\Delta)^{\sigma/2}u$ for every $1<p<\infty$. Their kernels may be measurable in $y$, without homogeneity or symmetry, subject to ellipticity and a cancellation condition when $\sigma=1$. Their interior estimates include a gradient term for $\sigma\geq1$, with an arbitrarily small coefficient when $\sigma=1$; see (7.1)--(7.3) in \cite{DongKim2012}. See also Ba\~nuelos and Bogdan \cite{BanuelosBogdan2007} for related Fourier multiplier estimates and Zhang \cite{Zhang2013} for $L^p$-maximal regularity of nonlocal equations.

Dong and Kim \cite{DongKim2012}, in the discussion following Remark~2.4, asked whether their theory extends to $x$-dependent kernels with a uniform modulus of continuity in $x$. Mikulevi\v{c}ius and Pragarauskas \cite{MikuleviciusPragarauskas2014} obtained Sobolev estimates for variable kernels under H\"older continuity in $x$, with a restriction on $p$. Dong, Jung, and Kim \cite{DongJungKim2023} subsequently established $L^p$ estimates for every $1<p<\infty$ when the coefficient is H\"older continuous in $x$ with any positive exponent and merely measurable in $y$.

For more restricted classes of fully nonlinear operators, Yu \cite{Yu2017} proved $W^{\sigma,\epsilon}$ estimates, and Kitano \cite{Kitano2022} obtained $W^{\sigma,p}$ estimates under additional assumptions on the coefficients and the exponents. These results concern kernels with a quadratic angular dependence and homogeneity in the increment variable. For this class, dos Prazeres and Santos \cite{DosPrazeresSantos2024} also obtained optimal H\"older estimates for equations with $L^p$ right-hand sides.

In this paper, we show that (1.2)--(1.4) do not suffice for interior $L^p$ estimates for $(-\Delta)^{\sigma/2}u$, even for linear equations. Define
$$
Lu(x):=\frac12\int_{\mathbb R^n}\bigl(u(x+y)+u(x-y)-2u(x)\bigr)K(x,y)\,dy.
$$

\begin{theorem}
Let $1\leq p<\infty$ and $0<\sigma<2$. There exist a fixed kernel $K(x,y)=K(x,-y)$ satisfying (1.2)--(1.4), functions $u_m\in C^\infty(\mathbb R^n)\cap L^\infty(\mathbb R^n)$, and $f_m\in L^p(B_2)$ such that $Lu_m=f_m$ in $B_2$,
\begin{equation}
\|u_m\|_{L^\infty(\mathbb R^n)}\longrightarrow0,
\end{equation}
\begin{equation}
\|f_m\|_{L^p(B_2)}\longrightarrow0,
\end{equation}
\begin{equation}
\|Du_m\|_{L^p(B_4)}\longrightarrow0 \quad\text{if }\sigma>1,
\end{equation}
but
\begin{equation}
\liminf_{m\to\infty}\|(-\Delta)^{\sigma/2}u_m\|_{L^p(B_1)}>0.
\end{equation}
\end{theorem}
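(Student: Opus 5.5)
We will build the counterexample from a highly oscillating profile paired with a kernel whose angular structure is tuned, scale by scale, to that oscillation. The model is the classical second-order counterexample: for $a^{ij}(x)$ that are merely continuous, $W^{2,p}$ estimates can fail when $p=1$ or when $\omega$ decays too slowly. Here, however, we need failure for every $p$, and the extra room comes from the nonlocal structure.

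\textbf{Kernel.} Fix a unit vector $e=e_1$. The kernel will have the form
$$
K(x,y)=\frac{a(x,\hat y)}{|y|^{n+\sigma}},\qquad \hat y=y/|y|,
$$
where $a$ is even, smooth in $\hat y$ with bounded derivatives (this gives (1.3)), and takes values in $[\lambda,\Lambda]$ (this gives (1.2)). For a plane wave $\phi_\xi(x)=\cos(\xi\cdot x)$ we have
$$
L_{x_0}\phi_\xi=-m(x_0,\xi)\phi_\xi,\qquad m(x_0,\xi)=|\xi|^\sigma\int_{S^{n-1}}a(x_0,\theta)\,c\,|\hat\xi\cdot\theta|^\sigma\,d\theta .
$$
Here $c>0$ is a dimensional constant, obtained as in the computation of $c_{n,\sigma}$. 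The key freedom is this: the symbol $m(x_0,\cdot)$ of the frozen operator can differ from $c_{n,\sigma}^{-1}|\xi|^\sigma$ by a bounded factor. We could try to choose $a$ so that $m(x_0,\xi_m)$ vanishes, but this is impossible by ellipticity. So instead of making $L$ annihilate a single wave, we let $u_m$ be a superposition of $N_m$ waves placed in dyadic frequency shells $2^k$, $k\in[k_m,k_m+N_m]$. Each wave gets amplitude $2^{-k\sigma}N_m^{-1/2}$ and a random sign, or a lacunary phase arrangement.

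\textbf{Construction of $u_m$.} We then use $x$-dependence to produce cancellation between the frozen-coefficient part and a compensating term. Take $a(x,\theta)=1+\tfrac12\sin(\psi(x))\,b(\theta)$, where $b$ is a fixed even smooth angular function with $\int b\,|\theta_1|^\sigma\ne0$. Choose $\psi$ to vary only logarithmically, so that $\psi(x)\approx\sqrt{\log(1/|x-x_*|)}$ near points $x_*$ accumulating in $B_1$; this is exactly the modulus $(\log 1/r)^{-1/2}$. Then take
$$
u_m=\sum_k 2^{-k\sigma}N_m^{-1/2}\varepsilon_k\cos(2^kx_1)\,\eta_k(x),
$$
with cutoffs $\eta_k$ localized where the multiplier $m(x,2^ke_1)$ is nearly constant. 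We design the construction so that $Lu_m$ is the sum of nearly orthogonal terms whose coefficients telescope, via the $x$-variation of $a$ across scales, to a small total. Meanwhile $(-\Delta)^{\sigma/2}u_m$ is a Rademacher-type sum of unit-size terms, which by Khintchine has $L^p$ norm of order $1$ for every $p$.

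\textbf{Estimates.} We check (1.6) from $\sum_k 2^{-k\sigma}N_m^{-1/2}\to0$. We check (1.8) for $\sigma>1$ from $2^{k(1-\sigma)}\to0$; for $\sigma=1$ the same bound gives only boundedness, as the abstract indicates. The commutator terms $[L,\eta_k]$ and the far-field tails are controlled using (1.3), and continuity of $a$ in $x$ gives (1.4).

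\textbf{Main obstacle.} The hard step is making $\|Lu_m\|_{L^p}\to0$ while $(-\Delta)^{\sigma/2}u_m$ stays of order $1$. Ellipticity prevents exact cancellation at a single scale, so the cancellation must come from interaction across the $N_m$ scales. The logarithmic modulus should be precisely what lets $a$ rotate through enough angular profiles over $N_m\sim\log$-many dyadic scales while remaining continuous. I would first try to realize this with one-dimensional profiles in $x_1$ and $n=1$ angular freedom replaced by a two-component kernel. I expect the balancing of $N_m^{-1/2}$ against $\omega$ to be the delicate point.
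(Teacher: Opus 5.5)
There is a genuine gap. It sits exactly at the step you call the main obstacle, and it cannot be closed within your ansatz.

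Suppose $K(x,y)=a(x,\hat y)|y|^{-n-\sigma}$ and $u(x)=v(x_1)$. Pass to polar coordinates $y=\rho\theta$ and substitute $s=\rho|\theta_1|$. At every point $x$ this gives
\[
Lu(x)=\frac12\Bigl(\int_{S^{n-1}}a(x,\theta)|\theta_1|^\sigma\,d\theta\Bigr)\int_0^\infty\frac{v(x_1+s)+v(x_1-s)-2v(x_1)}{s^{1+\sigma}}\,ds .
\]
The quantity $(-\Delta)^{\sigma/2}u(x)$ is the same expression with $a\equiv c_{n,\sigma}$ and an extra minus sign. Since $a\ge\lambda$, you get $|(-\Delta)^{\sigma/2}u|\le(c_{n,\sigma}/\lambda)|Lu|$ pointwise, no matter how $a$ depends on $x$.

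Your $u_m$ is a superposition of waves $\cos(2^kx_1)$ times cutoffs whose commutator errors you intend to make negligible. Hence (1.7) and (1.9) cannot both hold. For $n=1$ evenness forces your kernel to be $a(x)|y|^{-1-\sigma}$, and the conclusion is immediate.

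In symbol language: a coefficient varying slowly compared with the wavelengths acts on every mode $2^ke_1$ through the same frozen multiplier $\mu(x)|\xi|^\sigma$, with $\mu\ge c\lambda>0$. That action is diagonal and of fixed sign, so nothing can ``telescope'' across scales.

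Separately, $\sin\psi$ with $\psi\approx\sqrt{\log(1/|x-x_*|)}$ has no limit at $x_*$. It is therefore discontinuous there, which already violates (1.4).

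The missing idea is resonance between the $x$-oscillation of the kernel and the modes of $u_m$. This requires a kernel that is not homogeneous in $y$. The paper takes
\[
K=c_{n,\sigma}|y|^{-n-\sigma}+\sum_m b_{\beta_m-2}(x)\sum_{j=\beta_m}^{\gamma_m}d_jb_j(x)q_j(y),
\]
with $b_j=\cos(2^jx_1)$ and $q_j$ having multiplier $|\xi|^\sigma\phi(2^{-j}\xi)$, supported in $|\xi|\sim2^j$.
\begin{itemize}
\item \textbf{Frequency matching.} The $j$-th piece sees only the mode $b_j$, since $L_ib_j=\delta_{ij}2^{j\sigma}b_j$. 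Its coefficient oscillates in $x$ at that same frequency.
\item \textbf{Coherent cancellation.} Take $u_m=2^{-(\beta_m-2)\sigma}b_{\beta_m-2}+\frac2{S_m}\sum_jd_j2^{-j\sigma}b_j$. The perturbation produces $\frac2{S_m}b_{\beta_m-2}\sum_jd_j^2b_j^2$. Because $2b_j^2=1+b_{j+1}$, the nonnegative means add coherently to exactly $b_{\beta_m-2}$. This cancels the $-b_{\beta_m-2}$ that $-(-\Delta)^{\sigma/2}$ produces from the low mode.
\item \textbf{Small remainder.} Everything else is a lacunary sum of size $O(S_m^{-1/2})$ in $L^p$.
\end{itemize}
So the order-one part of $(-\Delta)^{\sigma/2}u_m$ is a single low mode, not a Rademacher sum. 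The gain is the gap between the coherent sum $\sum_jd_j^2=S_m$ and the square-function size $\sqrt{S_m}$.

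The choice $d_j=d/\sqrt j$ reconciles two requirements. First, $d_j\to0$, which gives $\omega(r)\sim(\log1/r)^{-1/2}$ via $\sup_j\min\{1,2^jr\}/\sqrt j$. Second, $\sum_jd_j^2=\infty$. Finally, the scale separation of the $q_j$ keeps $\sum_jd_j|q_j(y)|\le C|y|^{-n-\sigma}$ even though $\sum_jd_j=\infty$. This scale separation is precisely what a kernel homogeneous in $y$ lacks.
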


Theorem~1.1 rules out analogous interior estimates under (1.2)--(1.4) when $\sigma\ne1$. The case $\sigma=1$ follows from the additional bound $\sup_m\|Du_m\|_{L^p(B_4)}<\infty$; see Section~4. The kernel and its modulus of continuity are fixed throughout the sequence. The result concerns a priori estimates for smooth solutions; we do not construct a solution with $(-\Delta)^{\sigma/2}u\notin L^p(B_1)$.

The construction uses dyadic cosine modes and signed kernels supported in disjoint annuli on the Fourier side. Their coefficients tend to zero, which gives (1.4), but their squares have divergent sum. By grouping the frequencies into finite blocks, we arrange that the perturbation of $-(-\Delta)^{\sigma/2}$ cancels its contribution on one lower frequency in each block. A lacunary $L^p$ estimate controls the remaining terms, while the lower frequency gives (1.9).

In a related direction, Jiang and Xu \cite{JiangXu2026} constructed a fixed uniformly elliptic nonlocal operator for which integral Aleksandrov--Bakelman--Pucci estimates fail. Their coefficients are measurable in $x$. Here we retain the uniform continuity condition (1.4), as well as the derivative bounds (1.3).

In Section~2, we construct $K$ and verify (1.2)--(1.4). In Section~3, we define $u_m$ and compute $Lu_m$. Section~4 proves (1.6)--(1.9) and treats the case $\sigma=1$. Throughout the paper, $B_r$ denotes the ball of radius $r$ centered at the origin, and constants $C>0$ may change from line to line.

\section{Construction of the kernel \texorpdfstring{$K$}{K}}

We are going to construct a fixed kernel $K$ of the special form below:
$$
K(x,y):=\frac{c_{n,\sigma}}{|y|^{n+\sigma}}+\sum_{m=1}^\infty b_{\beta_m-2}(x)\sum_{j=\beta_m}^{\gamma_m}d_jb_j(x)q_j(y).
$$
where $d_j,b_j,q_j$ will be defined below.
\medskip

\noindent\textbf{Step 1: The kernels $q_j$.}
For $f\in\mathcal S(\mathbb R^n)$, we use the Fourier transform
\begin{equation}
\widehat f(\xi)=\int_{\mathbb R^n}e^{-iy\cdot\xi}f(y)\,dy
\end{equation}
and the inversion formula
\begin{equation}
f(x)=\frac1{(2\pi)^n}\int_{\mathbb R^n}e^{i\xi\cdot x}\widehat f(\xi)\,d\xi.
\end{equation}
Fix, once for each $n$, a nonnegative radial $\phi\in C_c^\infty(\mathbb R^n)$ with $\supp\phi\subset\{3/4<|\xi|<5/4\}$ and $\phi(e_1)=1$, where $e_1=(1,0,\ldots,0)$. Set $\widehat q(\xi)=|\xi|^\sigma\phi(\xi)$, and define $q$ by (2.2). The factor $|\xi|^\sigma$ has the homogeneity of the fractional Laplacian multiplier.

\begin{lemma}
The function $q\in\mathcal S(\mathbb R^n)$ is real and radial, and $\int_{\mathbb R^n}q(y)\,dy=0$.
\end{lemma}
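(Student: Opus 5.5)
The plan is to read each property off the explicit multiplier $\widehat q(\xi)=|\xi|^\sigma\phi(\xi)$ and then pass it to $q$ through the inversion formula (2.2).

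\emph{Schwartz class.} Since $\supp\phi\subset\{3/4<|\xi|<5/4\}$, the function $|\xi|^\sigma$ is $C^\infty$ on a neighborhood of $\supp\phi$; the singularity of $|\xi|^\sigma$ at the origin is never reached. Hence $\widehat q=|\xi|^\sigma\phi\in C_c^\infty(\mathbb R^n)\subset\mathcal S(\mathbb R^n)$. The inverse Fourier transform maps $\mathcal S$ to itself, so $q\in\mathcal S(\mathbb R^n)$.

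\emph{Radial and real.} If $R$ is orthogonal, the change of variables $\xi\mapsto R\xi$ in (2.2) gives
$$
q(Rx)=\frac1{(2\pi)^n}\int e^{iR\xi\cdot Rx}\,\widehat q(R\xi)\,d\xi=q(x),
$$
because $\widehat q$ is radial ($\phi$ radial, and $|\xi|^\sigma$ radial). For realness, note that $\widehat q$ is real-valued and even, since $\widehat q(-\xi)=\widehat q(\xi)$ by radiality. Then
$$
\overline{q(x)}=\frac1{(2\pi)^n}\int e^{-i\xi\cdot x}\,\widehat q(\xi)\,d\xi,
$$
and the substitution $\xi\mapsto-\xi$ returns $q(x)$. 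Equivalently, $q(x)=(2\pi)^{-n}\int\cos(\xi\cdot x)\,\widehat q(\xi)\,d\xi$.

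\emph{Zero mean.} Since $q\in L^1$, (2.1) is valid at $\xi=0$ and gives $\int q\,dy=\widehat q(0)$. The origin lies outside $\supp\phi$, so $\widehat q(0)=0^\sigma\phi(0)=0$. We also need the Fourier transform of $q$ defined by (2.2) to be the original $\widehat q$; this is Fourier inversion on $\mathcal S$.

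No step is a real obstacle. The only point needing care is confirming that $|\xi|^\sigma\phi$ is smooth, which is exactly what the annular support of $\phi$ away from $0$ provides.
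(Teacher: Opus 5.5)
Your proof is correct and follows the paper's argument. You get the Schwartz class from $\widehat q\in C_c^\infty$, realness from $\widehat q$ being real and even, and $\int q=\widehat q(0)=0$; the only cosmetic difference is radiality, which you get by rotating directly in the inversion integral (2.2), whereas the paper rotates in the forward transform and invokes injectivity of the Fourier transform on $\mathcal S$ (both rest on Fourier inversion on $\mathcal S$, which you correctly flag).
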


\begin{proof}
Since $\widehat q\in C_c^\infty(\mathbb R^n)$, $q\in\mathcal S(\mathbb R^n)$. For every orthogonal matrix $O$,
$$
\int_{\mathbb R^n}e^{-iy\cdot\xi}q(y)\,dy=\widehat q(\xi)=\widehat q(O\xi)=\int_{\mathbb R^n}e^{-iz\cdot\xi}q(Oz)\,dz.
$$
The injectivity of the Fourier transform on $\mathcal{S}(\mathbb{R}^n)$ (see Stein and Shakarchi \cite{SteinShakarchi2003}) gives $q(Oz)=q(z)$. Since $\widehat q$ is real and even, (2.2) gives $q=\overline q$. Finally, $\int_{\mathbb R^n}q(y)\,dy=\widehat q(0)=0$.
\end{proof}

For $j\geq1$, define $\widehat q_j(\xi)=|\xi|^\sigma\phi(2^{-j}\xi)$. Then
\begin{equation}
\begin{aligned}
q_j(y)&=\frac1{(2\pi)^n}\int_{\mathbb R^n}e^{iy\cdot\xi}|\xi|^\sigma\phi(2^{-j}\xi)\,d\xi\\
&=\frac{2^{j(n+\sigma)}}{(2\pi)^n}\int_{\mathbb R^n}e^{iy\cdot(2^j\xi)}|\xi|^\sigma\phi(\xi)\,d\xi=2^{j(n+\sigma)}q(2^jy).
\end{aligned}
\end{equation}

\begin{lemma}
For $i=0,1,2$, there exists $C(i)<\infty$, depending only on $n,\sigma$, such that
\begin{equation}
\sum_{j=1}^\infty|D_y^iq_j(y)|\leq C(i)|y|^{-n-\sigma-i},\qquad y\ne0.
\end{equation}
\end{lemma}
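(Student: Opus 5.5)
Since $q\in\mathcal S(\mathbb R^n)$ by Lemma 2.1, for each $i=0,1,2$ and each $N\ge0$ there is a constant $A_{i,N}$, depending only on $n,\sigma$ (through the fixed $\phi$), such that $|D^iq(z)|\le A_{i,N}(1+|z|)^{-N}$ for all $z$. Differentiating the scaling identity (2.3) gives
$$
D_y^iq_j(y)=2^{j(n+\sigma+i)}(D^iq)(2^jy).
$$
The plan is to bound the sum over $j$ by splitting according to whether $2^j|y|\le1$ or $2^j|y|>1$. In the first range we use $N=0$, i.e.\ $|D^iq|\le A_{i,0}$. In the second range we use $N=n+\sigma+i+1$.

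\emph{Small $j$.} If $2^j|y|\le1$, we get $|D_y^iq_j(y)|\le A_{i,0}2^{j(n+\sigma+i)}$. This is a geometric series with ratio $2^{n+\sigma+i}>1$, so its sum is controlled by its largest term. Let $j_0$ be the largest integer with $2^{j_0}\le|y|^{-1}$; if no $j\ge1$ qualifies, this part is empty. Then
$$
\sum_{1\le j\le j_0}2^{j(n+\sigma+i)}\le \frac{2^{j_0(n+\sigma+i)}}{1-2^{-(n+\sigma+i)}}\le C|y|^{-n-\sigma-i}.
$$

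\emph{Large $j$.} If $2^j|y|>1$, with $N=n+\sigma+i+1$ we get
$$
|D_y^iq_j(y)|\le A_{i,N}2^{j(n+\sigma+i)}(2^j|y|)^{-N}=A_{i,N}|y|^{-n-\sigma-i}(2^j|y|)^{-1}.
$$
Summing over $j$ with $2^j|y|>1$ gives a geometric series in $(2^j|y|)^{-1}$. Its terms are at most $1,\tfrac12,\tfrac14,\dots$, so the sum is at most $2$. This part is therefore bounded by $2A_{i,N}|y|^{-n-\sigma-i}$.

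Adding the two ranges gives (2.4) with $C(i)$ depending only on $n,\sigma$ (and $i$). The bound holds uniformly in $y\ne0$, including $|y|\ge1$, where the first range is empty or small. There is no real obstacle here. The only point requiring care is to choose the decay exponent $N$ strictly larger than $n+\sigma+i$, so that the tail over large $j$ converges geometrically. The Schwartz property from Lemma 2.1 supplies such an $N$ for free.
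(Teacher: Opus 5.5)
Your proof is correct and follows essentially the same route as the paper. Both arguments combine the Schwartz decay of $D^iq$ with the scaling (2.3), split the sum at $2^j|y|=1$, and sum a geometric series on each side; the only cosmetic differences are your minimal choice $N=n+\sigma+i+1$ (the paper takes $N=3(n+\sigma+i)$) and that you fold the case $|y|>1/2$ into the empty-first-range case instead of treating it separately.
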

\textbf{Remark.} For $i=0,1,2,$ $C(i)$ clearly depend on the choice of $\phi$. Since $\phi$ has been fixed, we omitted the dependence on $\phi$ in the lemma.
\begin{proof}
By (2.3), since $q\in\mathcal{S}(\mathbb{R}^n)$, for every $N>0$, $|D_y^iq_j(y)|\leq C_{i,N}2^{j(n+\sigma+i)}(1+2^j|y|)^{-N}$. Set $a=n+\sigma+i$ and fix $N=3a$. Splitting at $2^j|y|=1$ gives
\begin{equation}
\begin{aligned}
\sum_{j=1}^\infty|D_y^iq_j(y)|&=\sum_{2^j|y|\leq1}|D_y^iq_j(y)|+\sum_{2^j|y|>1}|D_y^iq_j(y)|\\
&\leq\sum_{2^j|y|\leq1}C_12^{ja-N}+\sum_{2^j|y|>1}C_22^{ja}(2^j|y|)^{-N}=:S_1+S_2,
\end{aligned}
\end{equation}
where $C_1,C_2$ absorb constants depending on $i,N$ and the fixed kernel $q$. If $\{j\geq1:2^j|y|\leq1\}\ne\varnothing$, let $J$ be its largest element. The geometric series give
$$
\begin{aligned}
S_1&\leq C_1\sum_{j=1}^J2^{ja}=C_12^{Ja}\frac{1-2^{-Ja}}{1-2^{-a}}\leq\frac{C_1|y|^{-a}}{1-2^{-a}}\leq C|y|^{-a},\\
S_2&=C_2|y|^{-a}(2^J|y|)^{a-N}\sum_{j=1}^\infty2^{j(a-N)}\leq C|y|^{-a},
\end{aligned}
$$
since $1/2<2^J|y|\leq1$. Otherwise, $|y|>1/2$ and
$$
\sum_{j=1}^\infty|D_y^iq_j(y)|\leq C|y|^{-N}\sum_{j=1}^\infty2^{j(a-N)}\leq C|y|^{-N}\leq C|y|^{-a}.
$$
This proves (2.4).
\end{proof}

\medskip
\noindent\textbf{Step 2: The coefficients.}
Let $b_j(x)=\cos(2^jx_1)$, $j\geq1$. We will use $2b_j^2=b_{j+1}+1$ in Section 3. Below in Section 3 we will also use $q_j$ as kernels in operators to act on $b_j$. Therefore, we now compute their action on $b_j$. Define
$$
L_ju(x):=\frac12\int_{\mathbb R^n}\bigl(u(x+y)+u(x-y)-2u(x)\bigr)q_j(y)\,dy.
$$
Since $\int_{\mathbb R^n}q_i(y)\,dy=0$,
\begin{equation}
\begin{aligned}
L_ib_j(x)&=\frac12\int_{\mathbb R^n}\bigl(\cos(2^j(x_1+y_1))+\cos(2^j(x_1-y_1))-2\cos(2^jx_1)\bigr)q_i(y)\,dy\\
&=\cos(2^jx_1)\int_{\mathbb R^n}\cos(2^jy_1)q_i(y)\,dy\\
&=\widehat q_i(2^je_1)b_j(x)=2^{j\sigma}\phi(2^{j-i}e_1)b_j(x).
\end{aligned}
\end{equation}
When $i\neq j$, $|2^{j-i}e_1|\geq2$ or $|2^{j-i}e_1|\leq1/2$. Therefore, $\supp\phi\subset\{3/4<|\xi|<5/4\}$ and $\phi(e_1)=1$ imply $L_ib_j=0$ for $i\ne j$, and $L_jb_j=2^{j\sigma}b_j$. We can see from the above calculation that operators $L_j$ are like coordinate projections acting on $b_j$. This is the key property in our construction.

Now we choose constants $d_j$. Let $d>0$ be chosen in Step~3 and set $d_j=d/\sqrt j$. Then $\sum_{j\geq1}d_j^2=\infty$ and $\sum_{j\geq1}d_j^4<\infty$. Starting with $\beta_1=3$, choose $\gamma_m\geq\beta_m$ to be the smallest integer such that
\begin{equation}
S_m:=\sum_{j=\beta_m}^{\gamma_m}d_j^2\geq m^2,
\end{equation}
and set $\beta_{m+1}=\gamma_m+3$. Thus the intervals $[\beta_m,\gamma_m]$ are pairwise disjoint, $\beta_m\to\infty$, and $\beta_m-2$ belongs to none of these intervals. Only $S_m\to\infty$ and the separation of these frequencies will be used; $m^2$ may be replaced by any sequence tending to infinity, and the gap $3$ by any larger integer.

Define
\begin{equation}
K(x,y):=\frac{c_{n,\sigma}}{|y|^{n+\sigma}}+\sum_{m=1}^\infty b_{\beta_m-2}(x)\sum_{j=\beta_m}^{\gamma_m}d_jb_j(x)q_j(y).
\end{equation}
The first term gives $-(-\Delta)^{\sigma/2}$ in $L$. The factors $b_{\beta_m-2}$ will produce the cancellation in Section~3.

\medskip
\noindent\textbf{Step 3: Verification of (1.2)--(1.4).}

\begin{lemma}
The series in (2.8) defines $K\in C^\infty(\mathbb R^n\times(\mathbb R^n\setminus\{0\}))$.
\end{lemma}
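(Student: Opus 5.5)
We need to show that the series in (2.8) converges in $C^\infty_{\mathrm{loc}}(\mathbb R^n\times(\mathbb R^n\setminus\{0\}))$. The sum is then smooth, because smoothness is preserved under local uniform convergence of all derivatives. The first term $c_{n,\sigma}|y|^{-n-\sigma}$ is clearly smooth away from $y=0$, so only the double series needs attention. Since the blocks $[\beta_m,\gamma_m]$ are pairwise disjoint, we can rewrite that series as a single series $\sum_{j\in J}a_j(x)q_j(y)$. Here $J=\bigcup_m[\beta_m,\gamma_m]$, and $a_j(x)=d_j\,b_{\beta_m-2}(x)b_j(x)$ when $j\in[\beta_m,\gamma_m]$. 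It therefore suffices to bound every derivative $D_x^\alpha D_y^\kappa$ of the $j$-th term by a summable quantity, uniformly on compact sets of the form $\mathbb R^n\times\{|y|\geq\delta\}$ with $\delta>0$.

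\textbf{Derivatives in $x$.} Since $b_j(x)=\cos(2^jx_1)$, we have $|D_x^\alpha b_j|\leq 2^{j|\alpha|}$. For $j\in[\beta_m,\gamma_m]$ we have $\beta_m-2<j$, so by the Leibniz rule $|D_x^\alpha a_j(x)|\leq C_\alpha d_j2^{j|\alpha|}\leq C_\alpha 2^{j|\alpha|}$, uniformly in $x$. This growth is exponential in $j$, so the $x$-derivatives alone cannot give convergence. The decay has to come from the $y$-factor.

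\textbf{Derivatives in $y$.} By (2.3), $D_y^\kappa q_j(y)=2^{j(n+\sigma+|\kappa|)}(D^\kappa q)(2^jy)$. Since $q\in\mathcal S(\mathbb R^n)$ by Lemma 2.1, for every $N$ we get
\[
|D_y^\kappa q_j(y)|\leq C_{\kappa,N}2^{j(n+\sigma+|\kappa|)}(2^j|y|)^{-N}.
\]
This is the same estimate used in the proof of Lemma 2.2. On the region $|y|\geq\delta$, choose $N=n+\sigma+|\kappa|+|\alpha|+1$. The $j$-th term is then bounded by
\[
C_{\alpha,\kappa,\delta}\,2^{j(|\alpha|+n+\sigma+|\kappa|-N)}=C_{\alpha,\kappa,\delta}\,2^{-j},
\]
which is summable in $j$. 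The Weierstrass M-test now gives uniform convergence of every differentiated series on $\mathbb R^n\times\{|y|\geq\delta\}$, and hence the limit $K$ is $C^\infty$ there. Letting $\delta\downarrow0$ covers all of $\mathbb R^n\times(\mathbb R^n\setminus\{0\})$.

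The main obstacle is the exponential growth $2^{j|\alpha|}$ from differentiating the coefficients in $x$. The arbitrary Schwartz decay of $q$ absorbs it, but only away from $y=0$. This is why the lemma claims smoothness only on $\mathbb R^n\times(\mathbb R^n\setminus\{0\})$ and not uniform bounds in $x$-derivatives. The rearrangement into a single series is also legitimate: on each compact set the absolute bounds above are summable, so the order of summation does not matter.
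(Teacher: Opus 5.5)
Your proposal is correct and follows essentially the same argument as the paper. You bound the $x$-derivatives of $b_{\beta_m-2}b_j$ by $C2^{j|\alpha|}$, use (2.3) and the Schwartz decay of $q$ with $N$ large to get a geometrically decaying bound on $\{|y|\geq\delta\}$, and conclude by the M-test using $d_j\leq d$ and the disjointness of the blocks. One small wording slip: the sets $\mathbb R^n\times\{|y|\geq\delta\}$ are not compact, but your bounds give uniform convergence on them anyway, which is stronger than what is needed.
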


\begin{proof}
For multi-indices $\zeta,\delta$ and $\beta_m\leq j\leq\gamma_m$,
$$
\begin{aligned}
|\partial_x^\zeta(b_{\beta_m-2}b_j(x))|&\leq C_\zeta2^{j|\zeta|},\\
|\partial_y^\delta q_j(y)|&\leq C_{\delta,N}2^{j(n+\sigma+|\delta|)}(1+2^j|y|)^{-N}.
\end{aligned}
$$
Hence, for $|y|\geq\epsilon>0$,
$$
|\partial_x^\zeta\partial_y^\delta(b_{\beta_m-2}(x)b_j(x)q_j(y))|\leq C_{\zeta,\delta,N,\epsilon}2^{j(n+\sigma+|\zeta|+|\delta|-N)}.
$$
Choose $N>2(n+\sigma+|\zeta|+|\delta|)$. Since $d_j\leq d$ and the blocks are disjoint, the differentiated series converge locally uniformly for $y\ne0$.
\end{proof}

The radiality of $q_j$ gives $K(x,y)=K(x,-y)$. By $|b_j|\leq1$ and Lemma~2.2,
$$
\left|K(x,y)-\frac{c_{n,\sigma}}{|y|^{n+\sigma}}\right|\leq d\sum_{j=1}^\infty|q_j(y)|\leq\frac{dC(0)}{|y|^{n+\sigma}}.
$$
Fix $d$ by $dC(0)=c_{n,\sigma}/2$. Then
$$
\frac{c_{n,\sigma}}{2|y|^{n+\sigma}}\leq K(x,y)\leq\frac{3c_{n,\sigma}}{2|y|^{n+\sigma}}.
$$
For $i=1,2$, termwise differentiation and Lemma~2.2 give
\begin{equation}
\begin{aligned}
|D_y^iK(x,y)|&\leq\left|D_y^i\left(-\frac{c_{n,\sigma}}{|y|^{n+\sigma}}\right)\right|+d\sum_{j=1}^\infty|D_y^iq_j(y)|\\
&\leq\frac{A_i}{|y|^{n+\sigma+i}}.
\end{aligned}
\end{equation}
Thus (1.2)--(1.3) hold with $\lambda=c_{n,\sigma}/2$ and $\Lambda=\max\{3c_{n,\sigma}/2,A_1,A_2\}$.

For $j\in[\beta_m,\gamma_m]$ and $r=|x-x'|$, the mean value theorem and $|b_j|\leq1$ give
\begin{equation}
\begin{aligned}
&|d_jb_j(x)b_{\beta_m-2}(x)-d_jb_j(x')b_{\beta_m-2}(x')|\\
&\quad\leq\frac d{\sqrt j}\min\left\{r\sup_{t\in\mathbb R}\left|2^j\sin(2^jt)\cos(2^{\beta_m-2}t)+2^{\beta_m-2}\cos(2^jt)\sin(2^{\beta_m-2}t)\right|,2\right\}\\
&\quad\leq\frac d{\sqrt j}\min\{2^{j+1}r,2\}\leq\frac{2d}{\sqrt j}\min\{1,2^jr\}.
\end{aligned}
\end{equation}

\begin{lemma}
For $0<r\leq1/2$,
\begin{equation}
\sup_{j\geq1}\frac{\min\{1,2^jr\}}{\sqrt j}\leq\frac C{\sqrt{\log(1/r)}}.
\end{equation}
\end{lemma}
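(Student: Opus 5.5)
We will bound each term of the supremum by a case split at the threshold $j_0:=\log_2(1/r)$, i.e. at $2^{j_0}r=1$. Set $\ell:=\log(1/r)\ge\log 2$. For $0<r\le1/2$ we have $j_0=\ell/\log 2\ge1$. We will show that every term $\min\{1,2^jr\}/\sqrt j$ is at most $C\ell^{-1/2}$ with $C$ absolute.

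\emph{Case $j\ge j_0/2$.} Here we use the trivial bound $\min\{1,2^jr\}\le1$. This gives
$$
\frac{\min\{1,2^jr\}}{\sqrt j}\le\frac1{\sqrt j}\le\sqrt{\frac{2}{j_0}}=\sqrt{\frac{2\log2}{\ell}}.
$$

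\emph{Case $1\le j<j_0/2$.} Here we use $\min\{1,2^jr\}\le 2^jr$ and $\sqrt j\ge1$. Since $2^j<2^{j_0/2}=r^{-1/2}$, we get
$$
\frac{\min\{1,2^jr\}}{\sqrt j}\le 2^jr< r^{1/2}=e^{-\ell/2}.
$$
The function $t\mapsto\sqrt t\,e^{-t/2}$ is bounded on $[0,\infty)$. Therefore $e^{-\ell/2}\le C\ell^{-1/2}$ with an absolute constant $C$.

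Combining the two cases and taking the supremum over $j\ge1$ gives (2.11) with $C=\max\{\sqrt{2\log 2},\sup_{t>0}\sqrt t e^{-t/2}\}$. There is no real obstacle here. The only point that needs care is putting the split at $j_0/2$ rather than at $j_0$. This makes the factor $2^jr$ exponentially small, so it beats the weak $\ell^{-1/2}$ target, while on the other side $1/\sqrt j$ is already of order $\ell^{-1/2}$. The restriction $r\le1/2$ only ensures that $\ell$ is bounded below, so that the set of small $j$ is handled uniformly.
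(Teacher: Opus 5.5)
Your proof is correct and is essentially the paper's argument. The paper also splits at $j=\tfrac12\log_2(1/r)$ and bounds the small-$j$ terms by $2^jr\le\sqrt r\le C/\sqrt{\log(1/r)}$ and the large-$j$ terms by $j^{-1/2}\le\sqrt{2\log 2}/\sqrt{\log(1/r)}$.
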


\begin{proof}
If $j\leq\frac12\log_2(1/r)$, then $2^jr\leq\sqrt r$ and
$$
\frac{\min\{1,2^jr\}}{\sqrt j}\leq\sqrt{\frac rj}\leq\sqrt r\leq\frac C{\sqrt{\log(1/r)}}.
$$
If $j>\frac12\log_2(1/r)$, the left-hand side is at most $j^{-1/2}\leq\sqrt{2\log2}/\sqrt{\log(1/r)}$.
\end{proof}

Combining (2.4), (2.10), and (2.11), for $0<|x-x'|\leq1/2$,
\begin{equation}
\begin{aligned}
|K(x,y)-K(x',y)|&\leq\frac{Cd}{\sqrt{\log(1/|x-x'|)}}\sum_{j\geq1}|q_j(y)|\\
&\leq\frac{Cd}{\sqrt{\log(1/|x-x'|)}\,|y|^{n+\sigma}}.
\end{aligned}
\end{equation}
For $|x-x'|\geq1/2$, (2.10) gives
\begin{equation}
|K(x,y)-K(x',y)|\leq Cd\sum_{j\geq1}\frac{|q_j(y)|}{\sqrt j}\leq\frac{Cd}{|y|^{n+\sigma}}.
\end{equation}
Consequently, (1.4) holds with
\begin{equation}
\omega(r)=
\begin{cases}
0,&r=0,\\
\displaystyle\frac{C_\omega}{\sqrt{\log(1/r)}},&0<r\leq1/2,\\
\displaystyle\frac{C_\omega}{\sqrt{\log2}},&r>1/2,
\end{cases}
\end{equation}
where $C_\omega$ is sufficiently large, depending on the fixed $n,\sigma,\phi,d,\lambda,\Lambda$. This $\omega$ is continuous and nondecreasing, with $\omega(0)=0$.

\section{Construction of \texorpdfstring{$u_m$}{u\_m}}

Define
$$
u_m(x)=2^{-(\beta_m-2)\sigma}b_{\beta_m-2}(x)+\frac2{S_m}\sum_{j=\beta_m}^{\gamma_m}d_j2^{-j\sigma}b_j(x).
$$
Then $u_m\in C^\infty(\mathbb R^n)\cap L^\infty(\mathbb R^n)$, and (2.6) gives
\begin{equation}
L_iu_m(x)=
\begin{cases}
\displaystyle\frac{2d_i}{S_m}b_i(x),&\beta_m\leq i\leq\gamma_m,\\
b_{\beta_m-2}(x),&i=\beta_m-2,\\
0,&\text{otherwise}.
\end{cases}
\end{equation}

We first justify termwise integration in $L$. For $u\in C^\infty(\mathbb R^n)$ with $u,Du,D^2u$ bounded,
\begin{equation}
\begin{aligned}
&\left|\sum_{m=1}^\infty b_{\beta_m-2}(x)\sum_{j=\beta_m}^{\gamma_m}d_jb_j(x)L_ju(x)\right|\\
&\quad\leq\sum_{m=1}^\infty\sum_{j=\beta_m}^{\gamma_m}\frac{d_j}2\int_{\mathbb R^n}|u(x+y)+u(x-y)-2u(x)|\,|q_j(y)|\,dy\\
&\quad\leq\frac d2\int_{\mathbb R^n}|u(x+y)+u(x-y)-2u(x)|\sum_{j=1}^\infty|q_j(y)|\,dy.
\end{aligned}
\end{equation}
By Lemma~2.2,
$$
\int_{\mathbb R^n}|u(x+y)+u(x-y)-2u(x)|\sum_{j=1}^\infty|q_j(y)|\,dy\leq C\int_{\mathbb R^n}\frac{\min\{|y|^2,1\}}{|y|^{n+\sigma}}\,dy<\infty.
$$
Tonelli's theorem therefore justifies the interchange. Also, by the definition of $c_{n,\sigma}$,
\begin{equation}
\begin{aligned}
&\frac{c_{n,\sigma}}2\int_{\mathbb R^n}\frac{b_j(x+y)+b_j(x-y)-2b_j(x)}{|y|^{n+\sigma}}\,dy\\
&\quad=c_{n,\sigma}b_j(x)\int_{\mathbb R^n}\frac{\cos(2^jy_1)-1}{|y|^{n+\sigma}}\,dy\\
&\quad=2^{j\sigma}c_{n,\sigma}b_j(x)\int_{\mathbb R^n}\frac{\cos y_1-1}{|y|^{n+\sigma}}\,dy=-2^{j\sigma}b_j(x).
\end{aligned}
\end{equation}

Since $\beta_m-2$ belongs to none of the intervals $\{[\beta_k,\gamma_k]\}_{k=1}^\infty$, (3.1), (3.3), and $2b_j^2=b_{j+1}+1$ yield
\begin{equation}
\begin{aligned}
f_m(x):=Lu_m(x)&=-b_{\beta_m-2}(x)-\frac2{S_m}\sum_{j=\beta_m}^{\gamma_m}d_jb_j(x)+\frac{b_{\beta_m-2}(x)}{S_m}\sum_{j=\beta_m}^{\gamma_m}2d_j^2b_j^2(x)\\
&=-b_{\beta_m-2}(x)-\frac2{S_m}\sum_{j=\beta_m}^{\gamma_m}d_jb_j(x)+\frac{b_{\beta_m-2}(x)}{S_m}\sum_{j=\beta_m}^{\gamma_m}d_j^2(b_{j+1}(x)+1)\\
&=-\frac2{S_m}\sum_{j=\beta_m}^{\gamma_m}d_jb_j(x)+\frac{b_{\beta_m-2}(x)}{S_m}\sum_{j=\beta_m}^{\gamma_m}d_j^2b_{j+1}(x).
\end{aligned}
\end{equation}
Here the constant terms in $2b_j^2$ cancel $-b_{\beta_m-2}$ because $\sum_{j=\beta_m}^{\gamma_m}d_j^2=S_m$.

\section{Proof of Theorem 1.1}
We verify (1.6)-(1.9) in turn to prove the main theorem. As $m\to\infty$, since $\beta_m\to\infty$ and $S_m\geq m^2$,
$$
\begin{aligned}
\|u_m\|_{L^\infty(\mathbb R^n)}&\leq2^{-(\beta_m-2)\sigma}+\frac2{S_m}\sum_{j=\beta_m}^{\gamma_m}d2^{-j\sigma}\\
&\leq2^{-(\beta_m-2)\sigma}+\frac{2d}{S_m}\frac{2^{-\beta_m\sigma}}{1-2^{-\sigma}}\longrightarrow0.
\end{aligned}
$$
This proves (1.6). To prove (1.7), we use the following partial sum estimate.

\begin{lemma}
For every $1\leq p<\infty$, there exists $C_{n,p}<\infty$ such that for all real $a_j$ and integers $\gamma\geq\beta\geq1$,
\begin{equation}
\left\|\sum_{j=\beta}^\gamma a_j\cos(2^jx_1)\right\|_{L^p(B_4)}\leq C_{n,p}\left(\sum_{j=\beta}^\gamma a_j^2\right)^{1/2}.
\end{equation}
\end{lemma}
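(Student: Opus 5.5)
The function depends only on $x_1$, and the ball $B_4$ is contained in the slab $\{|x_1|\le4\}\times[-4,4]^{n-1}$. So the plan is to reduce (4.1) to a one-dimensional lacunary (Zygmund/Khintchine-type) estimate on the interval $[-4,4]$, and to reduce that, by periodicity, to an estimate on a single period.

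\textbf{Reduction to one dimension.} Writing $g(t)=\sum_{j=\beta}^\gamma a_j\cos(2^jt)$, we have
$$
\|g(x_1)\|_{L^p(B_4)}^p\le 8^{n-1}\int_{-4}^4|g(t)|^p\,dt .
$$
Since $j\ge1$, $g$ is $\pi$-periodic, and $[-4,4]$ is covered by three periods. It therefore suffices to prove
$$
\Bigl(\frac1{2\pi}\int_0^{2\pi}|g|^p\Bigr)^{1/p}\le C_p\Bigl(\sum a_j^2\Bigr)^{1/2}.
$$

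\textbf{The one-dimensional estimate.} For $p\le2$ this follows from Hölder's inequality and Parseval's identity: the frequencies $2^j$ are distinct, so $\frac1{2\pi}\int_0^{2\pi}|g|^2=\frac12\sum a_j^2$. For $p>2$ it is enough to treat even integers $p=2k$, since the case $p\le 2k$ then follows by Hölder. Expand $g^{2k}$ using $\cos\theta=(e^{i\theta}+e^{-i\theta})/2$. The mean of $|g|^{2k}$ equals $2^{-2k}$ times a sum over tuples $(j_1,\dots,j_{2k})$ with signs $\epsilon_i\in\{\pm1\}$ satisfying $\sum\epsilon_i2^{j_i}=0$, each weighted by $\prod a_{j_i}$.

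The key combinatorial fact is the standard lacunarity fact: for each target $N$, the number of representations $N=\sum_{i=1}^r\epsilon_i2^{j_i}$ with $r$ fixed is bounded by a constant depending only on $r$. Indeed, grouping equal $j$'s, the representation becomes a sum of $c_j2^j$ with $|c_j|\le r$, and the largest index is determined up to $O(\log r)$ choices. Bounding $\prod|a_{j_i}|$ by AM-GM in pairs then yields
$$
\int|g|^{2k}\le C_k\Bigl(\sum a_j^2\Bigr)^k .
$$

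An alternative route avoids the combinatorics. The random-sign functions $\sum\pm a_j\cos(2^jt)$ are handled by Khintchine's inequality. Alternatively, one can cite Zygmund's theorem on lacunary series (Zygmund, \emph{Trigonometric Series}, Ch.~V), which gives exactly $\|g\|_{L^p(\mathbb T)}\le C_p\|g\|_{L^2(\mathbb T)}$ for Hadamard-lacunary sequences with ratio $2$. I would cite Zygmund and sketch the even-integer moment argument for completeness.

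\textbf{Main obstacle.} The main obstacle is the counting step in the moment argument: one must show that the number of solutions of $\sum_{i=1}^{2k}\epsilon_i2^{j_i}=0$ with the multiset of $j$'s contributing is controlled uniformly in $\beta,\gamma$, and that the resulting weights sum to $C_k(\sum a_j^2)^k$. I would handle this by induction on $k$. In a vanishing signed sum of powers of $2$, the largest index must appear at least twice, or else be matched by lower terms whose indices lie within $\log_2(2k)$ of it. Peeling off such a pair or cluster reduces $k$, and each removal contributes a factor bounded by $C\sum a_j^2$ via Cauchy–Schwarz.
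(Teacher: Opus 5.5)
Your reduction to one variable (Fubini, periodicity, Parseval for $p\le 2$, H\"older down from an even integer) matches the paper's bookkeeping. Citing Zygmund's theorem on Hadamard-lacunary series does give a valid proof. The paper instead proves the even-moment bound directly, with no counting. It inducts on the number of terms, using that the translation $t\mapsto t+\pi/2^\beta$ reverses the sign of the lowest mode $\cos(2^\beta t)$ while fixing every $\cos(2^jt)$ with $j>\beta$. Averaging over the two translates kills the odd powers in the binomial expansion, and the induction hypothesis then yields $\int_0^{2\pi}|F|^{2w}\le 2\pi(2w)!\,A^{2w}$. This shift is exactly what makes lacunary cosines behave like Rademacher variables. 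Your Khintchine remark, by contrast, only controls the average over sign patterns, not the fixed series $g$, so it is not a proof on its own. The paper's route is self-contained with explicit constants; yours is shorter only because it outsources the work to Zygmund.

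The self-contained moment sketch, however, does not work as written. The ``standard lacunarity fact'' is false for signed sums: $0=2^j-2^j$ for every $j$, and a cancelling pair $+2^i-2^i$ can be appended to any representation of $N$. So the number of representations of a fixed $N$ grows with $\gamma-\beta$. Likewise, the top index is not determined by $N$, because the top terms may cancel, e.g.\ $2^J-2^{J-1}-2^{J-1}=0$ for all $J$.

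Your peeling step has the same problem. Removing the top index together with one nearby index does not leave a vanishing sum. Moreover, a minimal vanishing cluster containing $2^J$ can reach about $2k$ levels down, as in $2^J-2^{J-1}-2^{J-2}-\cdots-2^{J-m}-2^{J-m}=0$, not just $\log_2(2k)$ levels. The idea can be repaired by partitioning into minimal vanishing subsums and showing that each has gaps at most $\log_2$ of its size, but that is not what you wrote.

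The standard fix is to complexify. With $h(t)=\sum_j a_je^{i2^jt}$ one has $|g|\le|h|$, hence
\[
\frac1{2\pi}\int_0^{2\pi}|g|^{2k}\,dt\le\frac1{2\pi}\int_0^{2\pi}|h^k|^2\,dt=\sum_N\Bigl|\sum_{2^{j_1}+\cdots+2^{j_k}=N}a_{j_1}\cdots a_{j_k}\Bigr|^2 .
\]
For these unsigned sums the largest term lies in $[N/k,N]$, so each $N$ has at most $C_k$ ordered representations. Cauchy--Schwarz then bounds the right side by $C_k(\sum_ja_j^2)^k$.
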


\begin{proof}
Write $F(t)=\sum_{j=\beta}^\gamma a_j\cos(2^jt)$ and $A^2=\sum_{j=\beta}^\gamma a_j^2$. We claim that, for every integer $w\geq1$,
\begin{equation}
\int_0^{2\pi}|F(t)|^{2w}\,dt\leq2\pi(2w)!A^{2w}.
\end{equation}
We induct on the number of terms, simultaneously for all $w$. The one-term case follows from $|\cos t|\leq1$. For the induction step, let $p=2w$ and $G(t)=\sum_{j=\beta+1}^\gamma a_j\cos(2^jt)$. The shift $t\mapsto t+\pi/2^\beta$ leaves $G$ unchanged and reverses the sign of $\cos(2^\beta t)$. Hence
\begin{equation}
\begin{aligned}
\int_0^{2\pi}|F(t)|^p\,dt&=\frac12\int_0^{2\pi}|F(t)|^p\,dt+\frac12\int_0^{2\pi}\left|F\left(t+\frac\pi{2^\beta}\right)\right|^p\,dt\\
&=\frac12\int_0^{2\pi}\bigl((G(t)+a_\beta\cos(2^\beta t))^p+(G(t)-a_\beta\cos(2^\beta t))^p\bigr)\,dt\\
&=\int_0^{2\pi}\sum_{j=0}^w\binom p{2j}G(t)^{2j}(a_\beta\cos(2^\beta t))^{p-2j}\,dt\\
&\leq\sum_{j=0}^w\binom p{2j}a_\beta^{p-2j}\int_0^{2\pi}G(t)^{2j}\,dt.
\end{aligned}
\end{equation}
Let $B^2=\sum_{j=\beta+1}^\gamma a_j^2$. By the induction hypothesis,
\begin{equation}
\begin{aligned}
\int_0^{2\pi}|F(t)|^p\,dt&\leq2\pi a_\beta^p+2\pi\sum_{j=1}^w\binom p{2j}(2j)!a_\beta^{p-2j}B^{2j}\\
&=2\pi p!\sum_{j=0}^w\frac{a_\beta^{p-2j}B^{2j}}{(p-2j)!}\\
&\leq2\pi p!\sum_{j=0}^w\frac{w!}{j!(w-j)!}a_\beta^{p-2j}B^{2j}\\
&=2\pi p!(a_\beta^2+B^2)^w=2\pi p!A^{2w}.
\end{aligned}
\end{equation}
Here $w!(2w-2j)!\geq j!(w-j)!$ for $0\leq j\leq w$. This proves (4.2).

For arbitrary $1\leq p<\infty$, choose an integer $w$ with $p\leq2w<p+3$. H\"older's inequality and (4.2) give
\begin{equation}
\begin{aligned}
\int_0^{2\pi}|F(t)|^p\,dt&\leq(2\pi)^{(2w-p)/(2w)}\left(\int_0^{2\pi}|F(t)|^{2w}\,dt\right)^{p/(2w)}\\
&\leq(2\pi)^{(2w-p)/(2w)}\bigl(2\pi(2w)!A^{2w}\bigr)^{p/(2w)}=2\pi((2w)!)^{p/(2w)}A^p.
\end{aligned}
\end{equation}
By periodicity and Fubini's theorem,
$$
\int_{B_4}|F(x_1)|^p\,dx\leq\int_{[-4,4]^{n-1}}\left(2\int_{-\pi}^{\pi}|F(t)|^p\,dt\right)\,dx_2\cdots dx_n=C_0^p\int_{-\pi}^{\pi}|F(t)|^p\,dt.
$$
Together with (4.5), this gives (4.1), with $C_{n,p}=C_0(2\pi)^{1/p}((2w)!)^{1/(2w)}$.
\end{proof}

By (3.4), $|b_{\beta_m-2}|\leq1$, and Lemma~4.1,
$$
\begin{aligned}
\|f_m\|_{L^p(B_2)}&\leq\frac{2C_{n,p}}{S_m}\left(\sum_{j=\beta_m}^{\gamma_m}d_j^2\right)^{1/2}+\frac{C_{n,p}}{S_m}\left(\sum_{j=\beta_m}^{\gamma_m}d_j^4\right)^{1/2}\\
&\leq\frac{2C_{n,p}}{\sqrt{S_m}}+\frac C{S_m}\longrightarrow0.(m\to \infty)
\end{aligned}
$$
In the last inequality we applied $\sum_{j\geq1}d_j^4=d^4\sum_{j\geq1}j^{-2}<\infty$. This proves (1.7).

For $\sigma>1$, as $m\to \infty$, differentiation and $S_m\geq m^2$ gives
$$
\begin{aligned}
\|Du_m\|_{L^\infty(B_4)}&\leq C\left\|2^{(\beta_m-2)(1-\sigma)}\sin(2^{\beta_m-2}t)+\frac2{S_m}\sum_{j=\beta_m}^{\gamma_m}d_j2^{(1-\sigma)j}\sin(2^jt)\right\|_{L^\infty([-4,4])}\\
&\leq C\left(2^{(\beta_m-2)(1-\sigma)}+\frac{2d\,2^{(1-\sigma)\beta_m}}{S_m(1-2^{1-\sigma})}\right)\longrightarrow0.
\end{aligned}
$$
Thus $\|Du_m\|_{L^p(B_4)}\leq|B_4|^{1/p}\|Du_m\|_{L^\infty(B_4)}\to0$, proving (1.8).

To prove (1.9), (3.3) and Lemma~4.1 give
\begin{equation}
\|(-\Delta)^{\sigma/2}u_m-b_{\beta_m-2}\|_{L^p(B_1)}=\left\|\frac2{S_m}\sum_{j=\beta_m}^{\gamma_m}d_jb_j\right\|_{L^p(B_1)}\leq\frac{2C_{n,p}}{\sqrt{S_m}}.
\end{equation}
On the other hand, as $m\to \infty$, the Riemann--Lebesgue lemma yields
$$
\int_{B_1}|b_{\beta_m-2}(x)|^2\,dx=\frac12\int_{B_1}\bigl(1+\cos(2^{\beta_m-1}x_1)\bigr)\,dx\longrightarrow\frac12|B_1|.
$$
If $p\geq2$, H\"older's inequality gives $\|b_{\beta_m-2}\|_{L^2(B_1)}^2\leq\|b_{\beta_m-2}\|_{L^p(B_1)}^2|B_1|^{(p-2)/p}$, whence
\begin{equation}
\|b_{\beta_m-2}\|_{L^p(B_1)}\geq C|B_1|^{1/p}>0
\end{equation}
for all sufficiently large $m$. Together with (4.6), this implies
\begin{equation}
\liminf_{m\to\infty}\norm{(-\Delta)^{\sigma/2}u_m}_{L^p(B_1)}\geq C|B_1|^{1/p}>0.
\end{equation}
If $1\leq p<2$, then $|b_{\beta_m-2}|^p\geq|b_{\beta_m-2}|^2$, so
$$
\|b_{\beta_m-2}\|_{L^p(B_1)}\geq\left(\int_{B_1}b_{\beta_m-2}^2(x)\,dx\right)^{1/p}=\|b_{\beta_m-2}\|_{L^2(B_1)}^{2/p}.
$$
This lower bound and (4.6) again give (1.9), completing the proof of Theorem~1.1.

It remains to rule out the corresponding interior estimate when $\sigma=1$. In this case,
$$
(u_m)_{x_1}=-\sin(2^{\beta_m-2}x_1)-\frac2{S_m}\sum_{j=\beta_m}^{\gamma_m}d_j\sin(2^jx_1).
$$
The proof of Lemma~4.1 also applies to sine sums: $t\mapsto t+\pi/2^\beta$ reverses the sign of $\sin(2^\beta t)$ and leaves the terms with $j\geq\beta+1$ unchanged. Consequently,
$$
\begin{aligned}
\left\|\sum_{j=\beta_m}^{\gamma_m}d_j\sin(2^jx_1)\right\|_{L^p(B_4)}&\leq C'_{n,p}\left(\sum_{j=\beta_m}^{\gamma_m}d_j^2\right)^{1/2}=C'_{n,p}\sqrt{S_m},\\
\|Du_m\|_{L^p(B_4)}&\leq\|\sin(2^{\beta_m-2}x_1)\|_{L^p(B_4)}+\frac{2C'_{n,p}}{\sqrt{S_m}}\leq|B_4|^{1/p}+\frac{2C'_{n,p}}{\sqrt{S_m}}.
\end{aligned}
$$
Since $S_m\geq m^2$, $\sup_m\|Du_m\|_{L^p(B_4)}\leq M$ for some $M>0$. Let 
\[c=\liminf_{m\to\infty}\|(-\Delta)^{1/2}u_m\|_{L^p(B_1)}>0\] 
and choose $\epsilon=\min\{1,c/(2M)\}$, decreasing it if necessary so that $\epsilon<1$. If the estimate held under (1.2)--(1.4), then (1.6)--(1.7) would give $c\leq\epsilon M\leq c/2$, a contradiction.

\medskip
\noindent\textbf{Acknowledgements}: I am sincerely grateful to Professor Jingang Xiong, my advisor, and to Professor Tianling Jin for suggesting this research problem and for their valuable guidance and assistance in revising the manuscript.

\medskip
\noindent\textbf{AI usage statement}: During the preparation of this work, Chat GPT 5.6 Sol was used to assist in calculating. The manuscript was written by the author. The author takes full responsibility for the content of this paper.

\bigskip
\bigskip

\noindent C. Liu

\noindent School of Mathematical Sciences, Beijing Normal University, Beijing 100875, China\\
Email: \textsf{202531130999@mail.bnu.edu.cn}

\bigskip


\begin{bibdiv}
\begin{biblist}[\small]

\bib{BanuelosBogdan2007}{article}{author={Ba\~nuelos, Rodrigo}, author={Bogdan, Krzysztof}, title={L\'evy processes and Fourier multipliers}, journal={J. Funct. Anal.}, volume={250}, date={2007}, number={1}, pages={197--213},}

\bib{Caffarelli1989}{article}{author={Caffarelli, Luis A.}, title={Interior a priori estimates for solutions of fully nonlinear equations}, journal={Ann. of Math. (2)}, volume={130}, date={1989}, number={1}, pages={189--213},}

\bib{CaffarelliSilvestre2009}{article}{author={Caffarelli, Luis A.}, author={Silvestre, Luis}, title={Regularity theory for fully nonlinear integro-differential equations}, journal={Comm. Pure Appl. Math.}, volume={62}, date={2009}, number={5}, pages={597--638},}

\bib{CaffarelliSilvestre2011a}{article}{author={Caffarelli, Luis A.}, author={Silvestre, Luis}, title={The Evans--Krylov theorem for nonlocal fully nonlinear equations}, journal={Ann. of Math. (2)}, volume={174}, date={2011}, number={2}, pages={1163--1187},}

\bib{CaffarelliSilvestre2011b}{article}{author={Caffarelli, Luis A.}, author={Silvestre, Luis}, title={Regularity results for nonlocal equations by approximation}, journal={Arch. Ration. Mech. Anal.}, volume={200}, date={2011}, number={1}, pages={59--88},}

\bib{DongJungKim2023}{article}{author={Dong, Hongjie}, author={Jung, Pilgyu}, author={Kim, Doyoon}, title={Boundedness of non-local operators with spatially dependent coefficients and $L_p$-estimates for non-local equations}, journal={Calc. Var. Partial Differential Equations}, volume={62}, date={2023}, pages={Paper No. 62},}

\bib{DongKim2012}{article}{author={Dong, Hongjie}, author={Kim, Doyoon}, title={On $L_p$-estimates for a class of non-local elliptic equations}, journal={J. Funct. Anal.}, volume={262}, date={2012}, number={3}, pages={1166--1199},}

\bib{FernandezRealRosOton2024}{book}{author={Fern\'andez-Real, Xavier}, author={Ros-Oton, Xavier}, title={Integro-differential elliptic equations}, series={Progress in Mathematics}, volume={350}, publisher={Birkh\"auser/Springer}, address={Cham}, date={2024},}

\bib{GilbargTrudinger2001}{book}{author={Gilbarg, David}, author={Trudinger, Neil S.}, title={Elliptic partial differential equations of second order}, series={Classics in Mathematics}, publisher={Springer-Verlag}, address={Berlin}, date={2001},}

\bib{JiangXu2026}{article}{author={Jiang, Yongsheng}, author={Xu, Lei}, title={A counterexample to integral Aleksandrov--Bakelman--Pucci estimates for a nonlocal operator}, journal={arXiv:2610.16162}, date={2026}, url={https://arxiv.org/abs/2610.16162},}

\bib{JinXiong2016}{article}{author={Jin, Tianling}, author={Xiong, Jingang}, title={Schauder estimates for nonlocal fully nonlinear equations}, journal={Ann. Inst. H. Poincar\'e Anal. Non Lin\'eaire}, volume={33}, date={2016}, number={5}, pages={1375--1407},}

\bib{Kitano2022}{article}{author={Kitano, Shuhei}, title={$W^{\sigma,p}$ a priori estimates for fully nonlinear integro-differential equations}, journal={Calc. Var. Partial Differential Equations}, volume={61}, date={2022}, number={4}, pages={Paper No. 153},}

\bib{MikuleviciusPragarauskas2014}{article}{author={Mikulevi\v{c}ius, Remigijus}, author={Pragarauskas, Henrikas}, title={On the Cauchy problem for integro-differential operators in Sobolev classes and the martingale problem}, journal={J. Differential Equations}, volume={256}, date={2014}, number={4}, pages={1581--1626},}

\bib{DosPrazeresSantos2024}{article}{author={dos Prazeres, Disson S.}, author={Santos, Makson S.}, title={Optimal regularity for fully nonlinear nonlocal equations with unbounded source terms}, journal={arXiv:2409.03216}, date={2024}, url={https://arxiv.org/abs/2409.03216},}

\bib{Serra2015}{article}{author={Serra, Joaquim}, title={$C^{\sigma+\alpha}$ regularity for concave nonlocal fully nonlinear elliptic equations with rough kernels}, journal={Calc. Var. Partial Differential Equations}, volume={54}, date={2015}, number={4}, pages={3571--3601},}

\bib{SteinShakarchi2003}{book}{author={Stein, Elias M.}, author={Shakarchi, Rami}, title={Fourier analysis: An introduction}, series={Princeton Lectures in Analysis}, volume={1}, publisher={Princeton University Press}, address={Princeton, NJ}, date={2003},}

\bib{Yu2017}{article}{author={Yu, Hui}, title={$W^{\sigma,\epsilon}$-estimates for nonlocal elliptic equations}, journal={Ann. Inst. H. Poincar\'e Anal. Non Lin\'eaire}, volume={34}, date={2017}, number={5}, pages={1141--1153},}

\bib{Zhang2013}{article}{author={Zhang, Xicheng}, title={$L^p$-maximal regularity of nonlocal parabolic equations and applications}, journal={Ann. Inst. H. Poincar\'e Anal. Non Lin\'eaire}, volume={30}, date={2013}, number={4}, pages={573--614},}

\end{biblist}
\end{bibdiv}
\end{document}